\title{Littlewood Complexes for Symmetric Groups}
\author{Christopher Ryba}
\DeclareMathOperator{\Hom}{Hom}
\DeclareMathOperator{\Sym}{Sym}
\begin{document}
\maketitle

\newtheorem{theorem}{Theorem}[section]
\newtheorem{lemma}[theorem]{Lemma}
\newtheorem{proposition}[theorem]{Proposition}
\newtheorem{corollary}[theorem]{Corollary}
\newtheorem{example}[theorem]{Example}
\newtheorem{remark}[theorem]{Remark}
\newtheorem{definition}[theorem]{Definition}

\begin{abstract}
We construct a complex $\mathcal{L}_\bullet^\lambda$ resolving the irreducible representations $\mathcal{S}^{\lambda[n]}$ of the symmetric groups $S_n$ by representations restricted from $GL_n(k)$. This construction lifts to $\mathrm{Rep}(S_\infty)$, where it yields injective resolutions of simple objects. It categorifies stable Specht polynomials, and allows us to understand evaluations of these polynomials for all $n$.
\end{abstract}

\tableofcontents

\section{Introduction}
\noindent
In this paper, we work over an arbitrary field of characteristic zero, $k$. Let $V$ be a vector space, and write $T(V)$ for the tensor algebra of $V$, and $\Sym(V)$ for the symmetric algebra of $V$. Because $\Sym(V)$ is a quotient of $T(V)$ as an algebra, $\Sym(V)$ is in particular a module over $T(V)$. In fact, $\Sym(V)$ is a module for $T(V)^{\otimes n}$, for any $n$, where each tensor factor acts in the usual way. The main technical result of this paper is the computation of
\[
\mbox{Tor}_{i}^{T(V)^{\otimes n}}(k, \Sym(V))
\]
as a $GL(V) \times S_n$ module (where the symmetric group $S_n$ acts by permutation of tensor factors on $T(V)^{\otimes n}$, and trivially on $\Sym(V)$). These $\mathrm{Tor}$ groups are the homology of a certain Chevalley-Eilenberg complex, whose $GL(V)$-isotypic components we call Littlewood complexes and denote $\mathcal{L}_\bullet^\lambda$. The terms of $\mathcal{L}_\bullet^\lambda$ are representations of $S_n$ restricted from $GL_n(k)$ (where $S_n$ embeds as permutation matrices), which may be thought of as an extension to $S_n$ of the work of Sam, Snowden, and Weyman on Littlewood Complexes for classical groups \cite{SamSnowdenWeyman}. Although there are substantial similarities to their work (e.g. computing $\mathrm{Tor}$ groups with Koszul complexes), there are also significant differences (e.g. working with noncommutative algebras).
\newline \newline \noindent
For large $n$, the (co)homology of $\mathcal{L}_\bullet^\lambda$ is the Specht module $\mathcal{S}^{\lambda[n]}$ (where $\lambda[n]$ is the partition obtained by appending a part of size $n-|\lambda|$ to the start of $\lambda$) concentrated in degree zero. The complex $\mathcal{L}_\bullet^\lambda$ is defined for all $n \in \mathbb{Z}_{\geq 0}$ (in fact also $n = \infty$), and the (co)homology is either zero, or given by an irreducible representation of $S_n$ concentrated in a single degree, which can be computed using an algorithm similar to the Borel-Weil-Bott theorem.
\newline \newline \noindent
The complexes $\mathcal{L}_\bullet^\lambda$ yield injective resolutions of simple objects in the category $\mathrm{Rep}(S_\infty)$ studied by Sam and Snowden in \cite{SamSnowden2} (but see also \cite{SamSnowden1}). Additionally, the Euler characteristic of the complex recovers an identity of Assaf and Speyer \cite{AssafSpeyer} about stable Specht polynomials (these are the same as the irreducible character basis of Orellana and Zabrocki \cite{OZ2}), and allows us to determine how these polynomials behave for all $n$ (in particular, when $n$ is small).
\newline \newline \noindent
The structure of the paper is as follows. In Section 2, we recall relevant background material. In Section 3, we express some $\mathrm{Tor}$ groups computed in \cite{SamSnowdenWeyman} combinatorially. We then compute the $\mathrm{Tor}$ groups mentioned in the introduction in Section 4. In Section 5 we define the Littlewood complex $\mathcal{L}_\bullet^\lambda$, compare it to the version for classical groups in \cite{SamSnowdenWeyman}, and explain its connection to $\mathrm{Rep}(S_\infty)$. In Section 6, we explain how $\mathcal{L}_\bullet^\lambda$ categorifies stable Specht polynomials, and hence determine evaluations of stable Specht polynomials for small $n$.
\newline \newline \noindent
{\bf Acknowledgements.}
The author would like to thank Steven Sam and Andrew Snowden for helpful conversations.

\section{Background}
\noindent
We recall some facts about representations of symmetric groups and general linear groups in characteristic zero, as well the Koszul and Chevalley-Eilenberg complexes.
\newline \newline \noindent
A partition $\lambda = (\lambda_1, \lambda_2, \ldots)$ is a weakly decreasing sequence of nonnegative integers with finitely many nonzero entries. We call $\lambda_i$ the parts of the partition, and say that the length of $\lambda$ is $l(\lambda) = \max\{i \mid \lambda_i \neq 0\}$, the number of nonzero parts of $\lambda$. The size of $\lambda$ is $|\lambda| = \sum_i \lambda_i$, the sum of the parts. If $|\lambda| = n$, we say that $\lambda$ is a partition of $n$, and write $\lambda \vdash n$. Additionally, given a partition $\lambda$, we may define the dual (or transpose) partition $\lambda^\prime$ by ${\lambda^\prime}_i = |\{j \mid \lambda_j \geq i\}|$.
\newline \newline \noindent
The irreducible representations of the symmetric group $S_n$ over $k$ are the Specht modules $\mathcal{S}^\lambda$, indexed by partitions $\lambda \vdash n$. In particular $\mathcal{S}^{(n)}$ is the trivial representation, and $\mathcal{S}^{(1^n)}$ is the sign representation; we have that $\mathcal{S}^{(1^n)} \otimes \mathcal{S}^\lambda = \mathcal{S}^{\lambda^\prime}$. On the other hand, given a vector space $V$ over $k$, the irreducible polynomial representations of the general linear group $GL(V)$ are given by Schur functors $\mathbb{S}^\lambda(V)$ for $l(\lambda) \leq \dim(V)$. Here,
\[
\mathbb{S}^\lambda(V) = \left(V^{\otimes n} \otimes \mathcal{S}^\lambda \right)^{S_n},
\]
where the action of $S_n$ on $V^{\otimes n}$ is by permutation of tensor factors. In particular, $\mathbb{S}^{(n)}(V)$ and $\mathbb{S}^{(1^n)}(V)$ are the $n$-th symmetric and exterior powers of $V$ respectively. One formulation of Schur-Weyl duality asserts that the functor
\[
SW_n(-) = \left(V^{\otimes n} \otimes - \right)^{S_n}
\]
is an equivalence of categories between degree $n$ polynomial representations of $GL(V)$ and the category of representations of $S_n$, provided that $\dim(V) \geq n$. 
\newline \newline \noindent
Given a Lie algebra $\mathfrak{g}$, the Lie algebra homology with coefficients in a $\mathfrak{g}$-module $M$ is defined as
\[
H_i(\mathfrak{g},M) = \mbox{Tor}_{i}^{U(\mathfrak{g})}(k, M),
\]
where $U(\mathfrak{g})$ is the universal enveloping algebra of $\mathfrak{g}$, and $k$ is the trivial $\mathfrak{g}$-module. The Lie algebra homology may be computed with the Chevalley-Eilenberg complex, whose $r$-th term is
\[
\bigwedge\nolimits^r (\mathfrak{g}) \otimes M,
\]
we omit the differential because we will not need it. Details can be found in \cite{weibel}.
\newline \newline \noindent
Let $A = \bigoplus_{i \geq 0} A_i$ be a $\mathbb{Z}_{\geq 0}$-graded algebra such that $A_0 = k$, and each $A_i$ is finite dimensional over $k$. Let $A^+ = \oplus_{i > 0} A_i$ be the positively graded part of $A$. Then $k = A/A^+$ is a graded $A$-module. We say that $A$ is Koszul if the induced (``internal'') grading on the Ext algebra
\[
\mathrm{Ext}_A^{\bullet}(k,k)
\]
coincides with the homological grading.
\newline \newline \noindent
A Koszul algebra $A$ is necessarily a quadratic algebra (i.e. it is generated in degree one with relations in degree two):
\[
A = T(A_1)/(R),
\]
where $R \subseteq A_1 \otimes A_1$. We may define the Koszul dual algebra $A^!$ to be
\[
A^! = T(A_1^*)/(R^\perp),
\]
where $R^\perp \subseteq A_1^* \otimes A_1^*$ is the set of elements orthogonal to $R$ under the natural pairing. Note that $A^!$ is also a graded algebra (and $A^{!!} = A$).
\newline \newline \noindent
Koszul algebras admit a free resolution of $k$ (as a graded $A$-module) called the Koszul complex, whose $r$-th chain module is
\[
A \otimes (A_r^!)^* = \Hom_k(A_r^!, A).
\]
When we view the elements as linear functions from $A_r^!$ to $A$, the differential is given by
\[
d(f) = \sum_{i} x_i f(- \cdot x_i^*),
\]
where $\{x_i\}$ is a basis of $A_1$, and $\{x_i^*\}$ is the dual basis of $A_1^*$. The tensor product of Koszul algebras is again Koszul, and the associated Koszul complex is the tensor product of the individual Koszul complexes.
\newline \newline \noindent
Two particular examples will be useful for us. The first is that $\Sym(V)$ and $\bigwedge (V^*)$ are Koszul dual, yielding the Koszul complex
\[
\cdots \to \Sym(V) \otimes \bigwedge\nolimits^2(V)  \to \Sym(V) \otimes V \to \Sym(V) \otimes k \to 0,
\]
where the differential is given by $d = \sum_{i} v_i \otimes v_i^*$, where $v_i$ is a basis of $V$ (acting by multiplication on $\Sym(V)$), and $v_i^*$ is the dual basis of $V^*$ (acting on $\bigwedge(V)$ by contraction). The second example is the tensor algebra $T(V)$, and the algebra $k \oplus V^*$, where $k$ has degree zero and $V^*$ has degree one. This gives the complex
\[
0 \to T(V) \otimes V \to T(V)  \to 0,
\]
where the map in question is simply multiplication (viewing $V$ as the degree one graded part of $T(V)$). Finally, we remark that both of these complexes are $GL(V)$-equivariant and also they are well defined even if $V$ is infinite dimensional (although the corresponding algebras are not Koszul), because the complex is the direct limit of the corresponding complexes for finite dimensional subspaces of $V$ (direct limits preserve exactness).
\section{Some Tor Groups}
\noindent
Let $U, V, W$ be vector spaces, and let $d = \dim(U)$. Say that a pair of partitions $(\lambda, \mu)$ is admissible if $l(\lambda) + l(\mu) \leq d$, in which case $(\lambda_1, \ldots, \lambda_r, 0, \ldots, 0, -\mu_s^\prime, \ldots, \mu_1^\prime)$ (where $r = l(\lambda)$ and $s = l(\mu)$) is a well-defined weight of $GL(U)$, whose associated highest-weight irreducible representation we denote $\mathbb{S}^{[\lambda, \mu]}(U)$. For example, when $\mu$ is the empty partition, we have $\mathbb{S}^{[\lambda, \mu]} = \mathbb{S}^\lambda(U)$.
\newline \newline \noindent
Consider the algebras $A = \Sym(V \otimes W)$ and $B = \Sym(V \otimes U \oplus W \otimes U^*)$. Note that $B$ is an $A$-algebra via the canonical map
\[
V \otimes W \to V\otimes U \otimes W \otimes U^* \subseteq \Sym^2(V \otimes U \oplus W \otimes U^*),
\]
hence $B$ is in particular an $A$-module. Corollary 5.15 of \cite{SamSnowdenWeyman} states that as a representation of $GL(V) \times GL(W) \times GL(U)$,
\[
\mathrm{Tor}_{i}^{A}(B, k) = 
\bigoplus_{i_d(\lambda, \mu) = i} \mathbb{S}^\lambda(V) \otimes \mathbb{S}^\mu(W) \otimes \mathbb{S}^{[\tau_d(\lambda, \mu)]}(U),
\]
where the quantities $i_{d}(\lambda, \mu)$ and $\tau_{d}(\lambda, \mu)$ are defined via the following recursion (see Subsection 5.4 of \cite{SamSnowdenWeyman}, but note that the notation is slightly different).
\begin{definition}
If $(\lambda, \mu)$ is admissible, set $i_d(\lambda, \mu) = 0$ and $\tau_d(\lambda, \mu) = (\lambda, \mu)$. Otherwise, consider the Young diagrams of $\lambda$ and $\mu$. Let $R_\lambda$ and $R_\mu$ be the border strips of length $l(\lambda) + l(\mu) - d - 1$ starting at the intersection of the first column and final row of $\lambda$ and $\mu$ respectively, if they exist. If both exist, are nonempty, and $\lambda \backslash R_\lambda$ and $\mu \backslash R_\mu$ are partitions, put $\tau_d(\lambda, \mu) = \tau_d(\lambda \backslash R_\lambda, \mu \backslash R_\mu)$, and
\[
i_d(\lambda, \mu) = c(R_\lambda) + c(R_\mu) -1 + i_d(\lambda \backslash R_\lambda, \mu \backslash R_\mu),
\]
where $c(R)$ indicates the number of columns that the border strip $R$ intersects. If either border strip fails to exist, or is empty, or either $\lambda \backslash R_\lambda$ or $\mu \backslash R_\mu$ is not a partition, set $i_d(\lambda, \mu) = \infty$, and leave $\tau_d(\lambda, \mu)$ undefined.
\end{definition}

\noindent
The case $d=1$ will be important to us; we will characterise the functions $i_1$ and $\tau_1$. For this, we need a special case of the Bott algorithm.
\begin{definition}
Given a partition $\lambda$, and $n \in \mathbb{Z}$, we define $\delta_n(\lambda)$ and $\lambda[n]$ as follows. Let $r \geq l(\lambda)+1$, and consider the vector $v = (n - |\lambda|, \lambda_1, \ldots, \lambda_{r-1})$ (where $\lambda_i = 0$ if $i > l(\lambda)$). Let $\rho = (r-1, r-2, \ldots, 0)$ be the Weyl vector. If $v + \rho$ has no repeated entries, there is a unique permutation $w \in S_r$ such that $w(v + \rho)$ has decreasing entries. If $w(v + \rho) - \rho$ has nonnegative entries, it defines a partition, and we set $\lambda[n] = w(v + \rho) - \rho$ and $\delta_n(\lambda) = l(w)$ (using the usual length function on $S_r$). If $w(v + \rho) - \rho$ has a negative entry, or if $v + \rho$ has a repeated entry, take $\delta_n(\lambda) = \infty$ and leave $\lambda[n]$ undefined.
\end{definition}
\begin{remark}
We make a few observations.
\begin{enumerate}
\item This definition is independent of the value of $r$. 
\item When $n$ is sufficiently large, $(n-|\lambda|, \lambda_1, \ldots, \lambda_{r-1}) + \rho$ is decreasing, so $\delta_n(\lambda) = 0$, and $\lambda[n]$ is the partition obtained by appending a long top row to $\lambda$ so that the total size of the resulting partition is $n$. 
\item If we disregard the first entry of $v + \rho$, the entries are already ordered, so $\delta_{n}(\lambda)$ counts the number of indices $i$ such that $\lambda_i +r - (i+1) > n- |\lambda|+r-1$, which reduces to $\lambda_i - i > n - |\lambda|$.
\end{enumerate}
\end{remark}
\noindent
When $d=1$, a weight is an integer $n$. We restrict ourselves to the case $n \geq 0$, so in an admissible pair of partitions $(\lambda, \mu)$, we take $\mu$ to be the trivial partition.
\begin{theorem} \label{combi}
We have that $\tau_1(\lambda, \mu)$ is defined and equal to $n$ if and only if there is a partition $\nu$ such that $\lambda = \nu[n + 
|\nu|]$ and $\mu = \nu^\prime$. In this case, $i_1(\lambda, \mu) = |\nu| - \delta_{n + |\nu|}(\lambda)$.
\end{theorem}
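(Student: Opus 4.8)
The plan is to induct on $|\lambda| + |\mu|$, which the recursion defining $i_d, \tau_d$ strictly decreases at each step. For $d = 1$ and $n \geq 0$ the admissible pairs are exactly $((n), \emptyset)$; these correspond to $\nu = \emptyset$ (as $\emptyset[n] = (n)$ and $\delta_n(\emptyset) = 0$), and for them the asserted statements are immediate, since $i_1 = 0 = |\emptyset| - \delta_n(\emptyset)$. So assume $(\lambda,\mu)$ is not admissible, i.e.\ $l(\lambda) + l(\mu) \geq 2$; the recursion then removes border strips $R_\lambda \subseteq \lambda$ and $R_\mu \subseteq \mu$ of common length $L = l(\lambda) + l(\mu) - 2$, each starting at the box in the first column and last row.

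The technical core is an explicit description of $\nu[m]$. Unwinding the Bott straightening shows that, when $\nu[m]$ is defined and $k = \delta_m(\nu)$, one has $\nu[m] = (\nu_1 - 1, \dots, \nu_k - 1,\, m - |\nu| + k,\, \nu_{k+1}, \nu_{k+2}, \dots)$, a partition with $l(\nu) + 1$ parts. Write $\nu^\wedge := (\nu_2 - 1, \nu_3 - 1, \dots)$ for $\nu$ with its first row and column deleted. From the formula one checks the crucial compatibility: the border strip of length $h_\nu(1,1) := \nu_1 + \nu'_1 - 1$ at the bottom-left of $\nu[n+|\nu|]$ has complement $\nu^\wedge[n + |\nu^\wedge|]$ — with the \emph{same} $n$ — while the border strip of that same length at the bottom-left of $\nu'$ is simply its full rim, with complement $(\nu^\wedge)'$. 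One also records the numerology that propagates $i_1$: $|\nu| = |\nu^\wedge| + h_\nu(1,1)$; the column counts $c(R_\lambda) = \nu_1 - [\delta_{n+|\nu|}(\nu) \geq 1]$ and $c(R_\mu) = \nu'_1$; and $\delta_{n+|\nu|}(\nu) = \delta_{n+|\nu^\wedge|}(\nu^\wedge) + [\delta_{n+|\nu|}(\nu) \geq 1]$. These combine so that $c(R_\lambda) + c(R_\mu) - 1$, added to the inductive value of $i_1$ on the reduced pair, reproduces the asserted expression for $i_1(\lambda,\mu)$.

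Granting this, the ``if'' direction is routine: for $(\lambda,\mu) = (\nu[n+|\nu|], \nu')$ with $\nu \neq \emptyset$ the length $L$ computed from $l(\lambda), l(\mu)$ equals $h_\nu(1,1)$, so the recursion is forced to remove the two strips above, reducing to $(\nu^\wedge[n+|\nu^\wedge|], (\nu^\wedge)')$, and one invokes the inductive hypothesis. For the ``only if'' direction, suppose $\tau_1(\lambda,\mu)$ is defined and equal to $n$; the recursion reduces $(\lambda,\mu)$ to $(\lambda \setminus R_\lambda, \mu \setminus R_\mu)$, which by induction is $(\tilde\nu[n+|\tilde\nu|], \tilde\nu')$ for some $\tilde\nu$. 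One then argues that $R_\mu$ must be the full rim of $\mu$ (so $\tilde\nu = (\mu')^\wedge$) and that $\lambda$ is uniquely recoverable from $\lambda \setminus R_\lambda$ together with the value of $L$, so that $(\lambda,\mu) = (\nu[n+|\nu|], \nu')$ with $\nu := \mu'$, the $i_1$ formula again following from the numerology above.

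The step requiring the most care — and the main obstacle — is showing in the ``only if'' direction that the recursion necessarily strips off the \emph{full} rim of $\mu$, rather than some shorter removable border strip, and that $\lambda$ is then pinned down. This rests on an auxiliary invariant, proven along the same induction, that $l(\lambda) = \mu_1 + 1$ for every nonempty pair with $\tau_1(\lambda,\mu) \geq 0$ defined; this converts the constraint $L = l(\lambda) + l(\mu) - 2$ into $L = h_\mu(1,1)$. Combined with a uniqueness argument for reconstructing $\lambda$ from $\lambda \setminus R_\lambda$ — which uses that $\nu[n+|\nu|]$ being defined rules out the Bott collision $\nu_i - i = n$ — this closes the induction. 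A few degenerate configurations ($\nu^\wedge = \emptyset$, $\delta_{n+|\nu|}(\nu)$ zero versus positive, and $n = 0$) must be checked directly.
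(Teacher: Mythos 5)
Your proposal is correct and follows essentially the same inductive plan as the paper's proof: peel off the outer rim of $\nu'$ (and the corresponding length-$h_\nu(1,1)$ border strip from $\nu[n+|\nu|]$), reducing $\nu$ to $\nu^\wedge$, and track $i_1$ via the identity $|R| = r(R) + c(R) - 1$ together with the $0$-or-$1$ increment in $\delta$. The only real difference is presentational---you work from an explicit closed formula for the Bott straightening $\nu[m]$, where the paper argues through Macdonald-style arithmetic on the $\rho$-shifted vector---and you sketch the ``only if'' direction (the invariant $l(\lambda)=\mu_1+1$ forcing the full rim, plus uniqueness of the reattached strip), which the paper simply declares ``essentially the same'' and omits.
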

\begin{proof}
We recall some facts about a  partition $\nu$. Let $r, s \in \mathbb{Z}_{\geq 0}$ such that $r \geq l(\nu)$ and $s \geq l(\nu^\prime)$ (we take them to be sufficiently large so that subsequent calculations are well defined). We write $\rho_m = (m-1, m-2, \ldots, 0)$ for the Weyl vector of size $m$.
\begin{enumerate}
\item The disjoint union 
\[
\{(\nu + \rho_r)_i\}_{i=1}^r
\coprod
\{r+s-1 - (\nu^\prime + \rho_s)_j\}_{j=1}^s
\]
is equal to $\{0, 1, \ldots, r+s-1\}$ (see 1.7 in Chapter 1 Section 1 of \cite{Macdonald}).

\item Let $\sigma$ be obtained from $\nu$ by adding a border strip of size $p$. Then $\sigma + \rho_r$ is obtained from $\nu + \rho_r$ by adding $p$ to an entry, and rearranging the entries in decreasing order. (See Example 8 (a) in Chapter 1 Section 1 or \cite{Macdonald}.) Moreover, suppose the border strip starts in the $i$-th row and ends in the $j$-th row, with $i>j$. The $i$-th entry of $\nu + \rho_r$ was the one that $p$ was added to, and after reordering, it became the $j$-th entry of $\sigma + \rho_r$.

\item If $R$ is a border strip, and $r(R)$ is the number of rows that $R$ intersects, we have the equation $|R| = r(R) + c(R) - 1$. (This is clear when $|R| = 1$, and incrementing the size of $|R|$ extends $R$ into a new row or column.)

\item If $\nu[n + |\nu|]$ is defined, then $l(\nu[n + |\nu|]) = l(\nu) + 1$.
\end{enumerate}
\noindent
The proof is by induction on the number of steps of the recursion defining $\tau_n(\lambda, \mu)$ required to reach the base case of an admissible pair. The base case of our induction is zero steps, in which case the statement holds with $\nu$ being the trivial partition.
\newline \newline \noindent
Now let us consider how border strips may be removed (in accordance with the recursion). To prove the ``if'' direction, suppose we are given $(\nu[n + |\nu|], \nu^\prime)$. According to the algorithm, we must remove border strips of size
\[
|R| = l(\nu[n+|\nu|]) + l(\nu^\prime)-1-1 = l(\nu) + \nu_1 - 1.
\]
In the case of $\nu^\prime$, this is the length of the unique largest border strip (it intersects every row and column of $\nu^\prime$ by fact (3)). By fact (2), removing this border strip amounts to subtracting $|R|$ from the first entry of $\nu^\prime + \rho_s$, turning it from $\nu_1^\prime + s -1 = l(\nu) + s-1$ into $s - \nu_1$. We use fact (1) to turn this into a statement about $\nu$ rather than $\nu^\prime$. 
\newline \newline \noindent
The second part of the set partition of $\{0,1, \ldots, r+s-1\}$ contained
\[
r+s-1 - (l(\nu) + s-1) = r - l(\nu)
\]
before removing the border strip, while afterwards, that element was replaced by
\[
r+s-1 - (s - \nu_1) = \nu_1 + r - 1.
\]
Consequently, the first part of the set partition must have contained $\nu_1 + r -1$ before removing the border strip, and $r - l(\nu)$ afterwards. Indeed, this can be achieved by subtracting $|R|$ from the first entry of $\nu + \rho_r$. Let us write $\sigma$ for the partition obtained by removing this border strip from $\nu$. We must check that removing a particular border strip of length $|R|$ turns $\nu[n + |\nu|]$ into $\sigma[n + |\sigma|]$. We do this by verifying that adding the border strip to $\sigma[n + |\sigma|]$ yields $\nu[n + |\nu|]$.
\newline \newline \noindent
The entries of $\sigma[n +|\sigma|] + \rho_{r+1}$ are (in some order)
\[
n + r, r - l(\nu), \nu_2 + r - 2, \nu_3 + r - 3, \ldots, \nu_r.
\]
We check that adding the border strip by adding $|R|$ to the entry $r - l(\nu)$ is permitted by the algorithm. For this to be the case, the border strip must start in the first column and the bottom row of the resulting partition. This means that it must be constructed by adding $|R|$ to an entry corresponding to a part of $\sigma[n +|\sigma|]$ of size zero. We check that the entry $r - l(\nu)$ satisfies this requirement. Note that
\[
\nu_{l(\nu)} + r - l(\nu) > r - l(\nu) > \nu_{l(\nu)+1}+r-(l(\nu) +1) = r - l(\nu) - 1
\]
as $\nu_{l(\nu)} >0$ and $\nu_{l(\nu)+1} = 0$ by definition. Because $n \geq 0$, $n+r > r - l(\nu)$, so it follows that $r - l(\nu)$ is smaller than $n+r$ and exactly $l(\nu)-1$ other entries. Hence after sorting, the $l(\nu)$-th entry is $r - l(\nu)$. When we subtract $(\rho_{r+1})_{l(\nu)} = r - l(\nu)$, we obtain zero, as required.
\newline \newline \noindent
To complete the proof of the ``if'' direction, we must check that $i_n(\lambda, \mu)$ behaves as claimed. The amount by which $i_n(\nu[n+|\nu|], \nu^\prime)$ changes (in this recursion step) is
\[
c(R_{\nu[n+|\nu|]}) + c(R_{\nu^\prime}) - 1
=
|R| - r(R_{\nu[n+|\nu|]}) + c(R_{\nu^\prime})
=
|R|-(r(R_{\nu[n+|\nu|]}) - l(\nu)),
\]
where we use fact (3) and the fact that $R_{\nu^\prime}$ intersects all $l(\nu)$ columns of $\nu^\prime$. Note that $|R|$ is precisely $|\nu| - |\sigma|$, so it suffices to check that $\delta_{n + |\nu|}(\nu) - \delta_{n + |\sigma|}(\sigma) = r(R_{\nu[n+|\nu|]}) - l(\nu)$. Recall that $R_{\nu[n + |\nu|]}$ was added to $\sigma[n + |\sigma|]$ by adding $|R|$ to the entry of $\sigma[n + |\sigma|]+\rho_{r+1}$ at index $l(\nu)+1$ (which was equal to $r - l(\nu)$). This entry became $\nu_1 + r - 1$ which is either the second largest entry (if $n+r > \nu_1 + r -1$), or the largest entry (if $n+r < \nu_1 + r - 1$). Fact (2) tells us that $r(R_{\nu[n+|\nu|]})-l(\nu)$ is equal to $0$ if $n+r > \nu_1 + r-1$ (i.e. $n > \nu_1 - 1$), and equal to $1$ if $n+r < \nu_2 + r - 1$ (i.e. $n < \nu_1 - 1$). However, $\delta_{n + |\nu|}(\nu)$ counts the number of $i$ such that $\lambda_i - i > n$, while $\delta_{n + |\sigma|}(\sigma)$ counts only the number of such $i$ with $i > 1$. The difference is therefore $0$ if $n > \lambda_1 - 1$ and $1$ if $n < \lambda_1 -1$, as required. 
\newline \newline \noindent
The proof of the ``only if'' part is essentially the same, and so we omit it.
\end{proof}

\section{Calculating the Homology}
\noindent
In this section, we prove our main theorem.
\begin{theorem}
Let $A = T(V)^{\otimes n}$ as before. Then,
\[
\mathrm{Tor}_i^A(k, \Sym(V)) = \bigoplus_{|\lambda| - \delta_n(\lambda) = i} \mathbb{S}^{\lambda^\prime}(V) \otimes \mathcal{S}^{\lambda [n]}
\]
as representations of $GL(V) \times S_n$.
\end{theorem}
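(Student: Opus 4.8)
The plan is to compute the $\mathrm{Tor}$ groups with a Koszul complex and then recognise the answer as a weight space of the computation made in Section~3 (equivalently, of Corollary~5.15 of \cite{SamSnowdenWeyman}). Since $T(V)$ is Koszul with Koszul dual $k\oplus V^*$, its Koszul complex is $0\to T(V)\otimes V\to T(V)\to 0$ with the multiplication differential, and the Koszul complex of $A=T(V)^{\otimes n}$ is its $n$-th tensor power. Tensoring this free resolution of $k$ down along $A\to\Sym(V)$ (each tensor factor acting through $T(V)\twoheadrightarrow\Sym(V)$), one sees that $\mathrm{Tor}^A_i(k,\Sym(V))$ is the $i$-th homology of the complex whose degree-$r$ term is $\bigoplus_{S\subseteq[n],\,|S|=r}\Sym(V)\otimes V^{\otimes S}$, with differential the alternating sum over $j\in S$ of the map ``multiply the $j$-th copy of $V$ into $\Sym(V)$''. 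This complex is $GL(V)\times S_n$-equivariant, $S_n$ acting on the index set $[n]$ together with the Koszul signs coming from the degree-one tensor factors.

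First I would identify this complex. Fix vector spaces $W$ with $\dim W\geq n$ and $U$ with $\dim U=1$. The complex $\Sym(V)\otimes\Sym(W)\otimes\bigwedge^\bullet(V\otimes W)$, with differential multiplying a factor $v\otimes w$ into $\Sym(V)\otimes\Sym(W)$, is the Koszul complex computing $\mathrm{Tor}^{\Sym(V\otimes W)}_\bullet(\Sym(V\otimes U\oplus W\otimes U^*),k)$ for $d=1$ (the space $U$ only records an auxiliary grading). Its $GL(W)$-weight-$(1^n)$ subcomplex is visibly isomorphic, as a complex of $GL(V)\times S_n$-modules, to the complex of the previous paragraph: the weight condition makes each of the first $n$ coordinates of $W$ occur exactly once, either inside a factor $V\otimes W$ of $\bigwedge^r(V\otimes W)$ (contributing a tensor slot of $V$, with a sign) or inside $\Sym(W)$ (contributing nothing), and $[\bigwedge^r(V\otimes k^r)]_{(1^r)}\cong V^{\otimes r}\otimes\mathrm{sgn}$ as a $GL(V)\times S_r$-module, which accounts for the Koszul signs. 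Because passing to a $GL(W)$-weight space is exact, I conclude that $\mathrm{Tor}^{T(V)^{\otimes n}}_i(k,\Sym(V))$ is the weight-$(1^n)$ part of the $\mathrm{Tor}$ group of Section~3. Now $[\mathbb{S}^\mu(W)]_{(1^n)}$ equals $\mathcal{S}^\mu$ if $|\mu|=n$ and vanishes otherwise, so by Corollary~5.15 and Theorem~\ref{combi} this weight space is $\bigoplus\mathbb{S}^\lambda(V)\otimes\mathcal{S}^\mu$, summed over pairs $(\lambda,\mu)$ with $i_1(\lambda,\mu)=i$ and $|\mu|=n$ (the tensor factors $\mathbb{S}^{[\tau_1(\lambda,\mu)]}(U)$ disappear once $U$ is forgotten).

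The last step, and the one I expect to be the real obstacle, is to reorganise this sum over pairs into the single sum over partitions in the statement. By Theorem~\ref{combi} together with its mirror image for non-positive $\tau_1$ (obtained by exchanging the roles of $\lambda$ and $\mu$), every pair with $i_1$ finite has the form $(\nu[m+|\nu|],\nu')$ or $(\nu',\nu[m+|\nu|])$ for some partition $\nu$ and $m\geq 0$, and the constraint $|\mu|=n$ fixes the relation between $\nu$, $m$ and $n$. The claim to be verified is that, writing $\pi=\lambda'$, these pairs are in bijection with the partitions $\pi$ for which $\pi[n]$ is defined, that under this bijection $\mu=\pi[n]$, and that $i_1(\lambda,\mu)=|\pi|-\delta_n(\pi)$; granting this, the sum becomes $\bigoplus_{|\pi|-\delta_n(\pi)=i}\mathbb{S}^{\pi'}(V)\otimes\mathcal{S}^{\pi[n]}$, as desired. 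Proving it is a matter of chasing the Bott-algorithm bookkeeping: one checks an identity of the shape $\nu[m+|\nu|]'[n]=\nu'$ in the positive branch, that the mirror branch supplies exactly the $\pi$ with $|\pi|<n$, and that the two degree formulas coincide. These are elementary but delicate manipulations with the vectors $v+\rho$, in the same spirit as the proof of Theorem~\ref{combi}; the homological content of the theorem is essentially exhausted once the Koszul complex has been identified with the weight space above.
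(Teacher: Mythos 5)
Your approach coincides with the paper's in all essentials: compute the Koszul resolution of $k$ over $A=T(V)^{\otimes n}$, tensor down to $\Sym(V)$, recognize the resulting complex as the $d=1$ Koszul complex from Section~3 (your $GL(W)$-weight-$(1^n)$ extraction is equivalent to the paper's Schur--Weyl step, since $[\bigwedge^r(V\otimes k^r)]_{(1^r)}\cong V^{\otimes r}\otimes\mathrm{sgn}$), and then apply Theorem~\ref{combi}.

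Where you and the paper diverge is in the honesty of the final bookkeeping, and you are actually the more careful of the two. You correctly observe that Theorem~\ref{combi} only parametrizes pairs with $\tau_1\geq 0$, and that the pairs with $\tau_1<0$ must be obtained from the mirror image under $\lambda\leftrightarrow\mu$. Translating both branches into the language of the theorem requires a nontrivial Bott-algorithm identity; in your conventions the mirror branch ($|\pi|<n$) matches the statement on the nose, while the other branch needs the identities $\nu[m+n]'[n]=\nu'$ and $|\nu[m+n]'|-\delta_n(\nu[m+n]')=n-\delta_{m+n}(\nu)$ for $|\nu|=n$, $m\geq 0$, which you name but do not verify. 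The paper's write-up, having swapped which of $V,W$ carries the $S_n$-weight, makes its main branch the easy one (it gives precisely the $|\lambda|\leq n$ terms of the stated sum) and then asserts that undoing Schur--Weyl ``immediately yields the result'' --- but the $|\lambda|>n$ terms do occur (at $n=0$ they account for all of $\Sym^{>0}(V)$), come from the $\tau_1<0$ branch, and require exactly the identity you isolate. So both proofs, as written, punt on the same combinatorial verification; yours flags it explicitly, which is an improvement. To close the gap one needs to carry out the $v+\rho$ manipulation (in the spirit of the proof of Theorem~\ref{combi}), or equivalently observe that it is forced by the $V\leftrightarrow W$ symmetry of $\mathrm{Tor}^{\Sym(V\otimes W)}_\bullet(k,\Sym(V\oplus W))$ together with the uniqueness of the decomposition into $GL(V)\times GL(W)$-isotypics.
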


\begin{proof}
Because $T(V)$ is Koszul with $T(V)_1 = V$, it follows that $A$ is also Koszul, with generating set $A_1 = V^{\oplus n} = V \otimes k^n$. We will write $v^{(r)}$ to indicate the vector $v \in V$ inside the $r$-th summand. The Koszul complex for $A$ is the $n$-th tensor power of the Koszul complex for $T(V)$:
\[
0 \to T(V)\otimes V \to T(V) \to 0.
\]
Writing the total space of this complex as $T(V) \otimes (k \oplus V)$, where $T(V)$ and $k$ are in degree zero and $v$ is in degree one, the $n$-th tensor power is
\[
A \otimes (k \oplus V)^{\otimes n}.
\]
The differential is defined as follows. Let $\{v_i\}$ be a basis of $V$, and $\{v_i^*\}$ be the dual basis of $V^*$. Then,
\[
d = \sum_{r, i} v_i^{(r)} \otimes {v_i^*}^{(r)},
\]
where $(v_i^*)^{(r)}$ acts on the $r$-th tensor factor of $(k\oplus V)^{\otimes n}$ by annihilating $k$, and mapping $V$ to $k$ in the natural way.
\newline \newline \noindent
To calculate $\mathrm{Tor}_i^A(k, \Sym(V))$, we apply the functor $- \otimes_A \Sym(V)$ to the projective resolution. We obtain the chain groups
\[
\Sym(V) \otimes
(k \oplus V)^{\otimes n} 
\]
and differential
\[
d = \sum_{i} v_i \otimes \left( \sum_r {v_i^*}^{(r)} \right),
\]
because $v^{(r)} \in A$ acts on $\Sym(V)$ by multiplying by $v$.
\newline \newline \noindent
Fix an auxiliary vector space $W$ of dimension at least $n$, and let us apply the Schur-Weyl functor $\left(W^{\otimes n} \otimes -\right)^{S_n}$ to the complex. This turns it from a complex of $GL(V) \times S_n$-modules into a complex of $GL(V) \times GL(W)$-modules. 
We obtain
\begin{eqnarray*}
& & \Sym(V) \otimes \left( W^{\otimes n} \otimes (k \oplus V)^{\otimes n} \right)^{S_n} \\
&=&
\Sym(V) \otimes \left((W \oplus W \otimes V)^{\otimes n}\right)^{S_n} \\
&=&
\Sym(V) \otimes \Sym^n(W \oplus W \otimes V) \\
&=&
\Sym(V) \otimes \bigoplus_i \Sym^{n-i}(W) \otimes \Sym^i (W \otimes V),
\end{eqnarray*}
where $W \otimes V$ is an odd superspace, so its $i$-th symmetric power is equal to the $i$-th exterior power of the underlying vector space. We may therefore write the $i$-th chain group as
\[
\Sym(V) \otimes \bigoplus_i \Sym^{n-i}(W) \otimes \bigwedge \nolimits^i (W \otimes V).
\]
We observe that the action of $\sum_r {v_i^*}^{(r)}$ becomes ``contracting with $v_i^*$'', meaning
\[
\sum_{j} w_j \otimes (w_j^* \otimes v_i^*),
\]
where $w_j$ is a basis of $W$ and $w_j^*$ is the dual basis of $W^*$. Thus the differential becomes
\[
\sum_{i,j} v_i \otimes w_j \otimes (w_j^* \otimes v_i^*).
\]
At this point it is convenient to consider all $n$ simultaneously. Let us take the direct sum of these complexes over all $n$, which has the effect of replacing $\Sym^{n-i}(W)$ by $\Sym(W)$. After identifying $\Sym(W) \otimes \Sym(V) = \Sym(W \oplus V)$, we recognise this as the Koszul complex computing
\[
\mathrm{Tor}_{i}^{\Sym(W \otimes V)}(k, \Sym(W \oplus V)),
\]
where the action of $w \otimes v \in \Sym(W \otimes V)$ on $\Sym(W \oplus V)$ is my multiplying by $wv$. This is the $d=1$ case of what is computed by \cite{SamSnowdenWeyman} as discussed in Section 3. Applying Theorem \ref{combi} gives
\[
\mathrm{Tor}_{i}^{\Sym(W \otimes V)}(k, \Sym(W \oplus V))
=
\bigoplus_{m \in \mathbb{Z}_{\geq 0}}
\bigoplus_{|\nu| - \delta_{m + |\nu|}(\nu) = i}
\mathbb{S}^{\nu^\prime}(V) \otimes
\mathbb{S}^{\nu[m + |\nu|]}(W).
\]
We now undo Schur-Weyl duality, which has the effect of picking out only the Schur functors of degree $n$ in $W$, and converting them back in to representations of $S_n$. This immediately yields the result.
\end{proof}

\section{A Littlewood Complex for Symmetric Groups}
\noindent
Recall that the tensor algebra $T(V)$ is the universal enveloping algebra of the free Lie algebra $L(V)$. Therefore $A = T(V)^{\otimes n}$ is the universal enveloping algebra of $L(V)^{\oplus n} = L(V) \otimes k^n$. So, we have computed the Lie algebra homology
\[
H_i(L(V)\otimes k^n; \Sym(V)) = \mathrm{Tor}_i^A(k, \Sym(V)).
\]
Because the action of $S_n$ on $L(V)^{\oplus n} = L(V) \otimes k^n$ by permutation of summands (equivalently, permutation on $k^n$) preserves the Lie algebra structure, this passes to an action of $S_n$ on the Lie algebra homology (as before, $S_n$ acts trivially on $\Sym(V)$). Our proof relied on computing the relevant Koszul complex, but we could have also considered the Chevalley-Eilenberg complex. Here, the chain groups are
\begin{eqnarray*}
& &\bigwedge\nolimits^i (L(V) \otimes k^n) \otimes \Sym(V) \\
&=& \bigoplus_{\mu \vdash i} \mathbb{S}^{\mu^\prime}(L(V)) \otimes \mathbb{S}^{\mu}(k^n) \otimes \Sym(V),
\end{eqnarray*}
where the action of $S_n$ on $\mathbb{S}^{\lambda}(k^n)$ is implicitly restricted from $GL_n(k)$ (embedded as permutation matrices).
\newline \newline \noindent
By applying $\Hom_{GL(V)}(\mathbb{S}^{\lambda^\prime}(V), -)$, this provides a resolution of $\mathcal{S}^{\lambda[n]}$ by modules restricted from $GL_n(k)$. If we define the multiplicity space
\[
M(\lambda, \mu) = 
\Hom_{GL(V)}(\mathbb{S}^{\lambda^\prime}(V), \mathbb{S}^{\mu^\prime}(L(V)) \otimes \Sym(V)),
\]
we may write the $i$-th chain module as
\[
\bigoplus_{\mu \vdash i} M(\lambda, \mu) \otimes \mathbb{S}^{\mu}(k^n).
\]
Because $L(V)$ is equal to $V$ plus terms of higher degree as representations of $GL(V)$, $\mathbb{S}^{\mu^\prime}(L(V))$ is isomorphic to $\mathbb{S}^{\mu^\prime}(V)$ plus terms of higher degree. Hence $M(\lambda, \mu) = 0$ unless $|\mu| \leq |\lambda|$, and when $|\mu| = |\lambda|$, $M(\lambda, \mu)$ is one-dimensional if $\mu=\lambda$ and zero otherwise. Thus our complex becomes
\[
0 \to \mathbb{S}^{\lambda}(k^n) \to \bigoplus_{\mu \vdash |\lambda| - 1} M(\lambda, \mu) \otimes \mathbb{S}^{\mu}(k^n)
\to \cdots \to
\bigoplus_{\mu \vdash 0} M(\lambda, \mu) \otimes \mathbb{S}^{\mu}(k^n)
\to 0.
\]
The homology is $\mathcal{S}^{\lambda[n]}$, concentrated in degree $|\lambda| - \delta_n(\lambda)$ (or vanishes identically, if $\delta_n(\lambda) = \infty$).
\begin{definition}
Let us instead view the above chain complex as a cochain complex (i.e. change the degree indexing from descending to ascending). We call this the Littlewood complex associated to the partition $\lambda$, and denote it $\mathcal{L}_\bullet^\lambda$.
\end{definition}
\noindent
The cohomology of $\mathcal{L}_\bullet^\lambda$ is $\mathcal{S}^{\lambda[n]}$, concentrated in degree $\delta_n(\lambda)$ (or the complex is acyclic, if $\delta_n(\lambda) = \infty$).
\newline \newline \noindent
The name ``Littlewood complex'' is taken from \cite{SamSnowdenWeyman}, where it has the following meaning. Let $V$ be a vector space, and let $G(V) \subseteq GL(V)$ be either the symplectic group on $V$ (for a choice of symplectic form), or the orthogonal group on $V$ (for a choice of orthogonal form). An irreducible representation of $G(V)$ is not typically a representation of $GL(V)$, however it turns out that that any irreducible representation of $G(V)$ may be resolved by representations that extend to $GL(V)$. The Littlewood complex $L_\bullet^\lambda$ (associated to a partition $\lambda$ defining an irreducible representation) is the minimal such resolution.
\newline \newline \noindent
Key features of the Littlewood complexes $L_\bullet^\lambda$ discussed in Section 2 of \cite{SamSnowdenWeyman} include:
\begin{enumerate}
\item One can define $L_\bullet^\lambda$ without issue even if $l(\lambda) > \dim(V)$, in which case the homology is either zero, or is equal to an irreducible representation of $G(V)$ concentrated in a single degree. There is a ``modification rule'' for determining the irreducible and the degree.
\item The modification rule is similar to the Borel-Weil-Bott theorem (it involves a dotted Weyl group action).
\item The Littlewood complex $L_\bullet^\lambda$ may be viewed as an isotypic component of a $GL(E)$-equivariant (for an auxiliary vector space $E$) Koszul complex.
\item The homology may be computed using algebraic geometry and the cohomology of certain vector bundles on Grassmannians.
\item There is a category of ``algebraic'' representations of $G(\infty)$ (i.e. the infinite symplectic group, or infinite orthogonal group), denoted $\mathrm{Rep}(G(\infty))$. There is a specialisation functor 
\[
\Gamma_V: \mathrm{Rep}(G(\infty)) \to \mathrm{Rep}(G(V)).
\]
The Littlewood complex $L_\bullet^\lambda$ computes the derived functors on the simple objects of $\mathrm{Rep}(G(\infty))$ (which are indexed by partitions $\lambda$ of any size). See Subsection 1.3 of \cite{SamSnowdenWeyman} for more details.
\end{enumerate}
In our case, the first two points apply verbatim. The third point is slightly different; our Littlewood complex is an isotypic component of a Chevalley-Eilenberg complex. In the case of an abelian Lie algebra, the Chevalley-Eilenberg complex reduces to a Koszul complex for a polynomial algebra. So our situation may be viewed as analogous in the noncommutative world. The fourth point has no clear comparison. It would be interesting to to have a ``geometric'' realisation of our Littlewood complex. There is a version of the fifth point, which we now briefly explain.
\newline \newline \noindent
There is a category $\mathrm{Rep}(S(\infty))$ of ``algebraic'' representations of the infinite symmetric group. We direct the interested reader to Section 6 of \cite{SamSnowden2} for more information about this category. Objects of $\mathrm{Rep}(S(\infty))$ are subquotients of direct sums of tensor powers of $k^\infty$ (the permutation representation of $S_\infty$). Unlike its finite counterparts, this category is not semisimple.
\newline \newline \noindent
The simple objects are indexed by partitions $\lambda$ of any size, and are directed limits of $\mathcal{S}^{\lambda[n]}$ as $n \to \infty$. The objects $\mathbb{S}^{\mu}(k^\infty)$ are injective, although they are not indecomposable. We may set $n = \infty$ in our Littlewood complex, which becomes the directed limit of $\mathcal{L}_\bullet^\lambda$ for finite $n$ under the inclusions $k^n \to k^{n+1}$ defined by appending a zero entry to a vector. The cohomology is the directed limit of $\mathcal{S}^{\lambda[n]}$ in degree zero. So the $n=\infty$ Littlewood complex is an injective resolution of the simple object of $\mathrm{Rep}(S(\infty))$ indexed by $\lambda$.
\newline \newline \noindent
There is a specialisation functor
\[
\Gamma_n : \mathrm{Rep}(S(\infty)) \to \mathrm{Rep}(S_n),
\]
such that $\Gamma_n(k^\infty) = k^n$ and $\Gamma_n$ is a left-exact tensor functor. Applying this functor to the $n = \infty$ Littlewood complex yields $\mathcal{L}_\bullet^\lambda$ for finite $n$. Therefore $\mathcal{L}_\bullet^\lambda$ provides a computation of the derived specialisation functors on simple objects. This was originally done in Proposition 7.4.3 of \cite{SamSnowden1} using the indecomposable injective objects instead of $\mathbb{S}^\mu(k^\infty)$.

\section{Categorification of Stable Specht Polynomials}
\noindent
Suppose now that $n$ is sufficiently large, so that $\delta_n(\lambda) = 0$, and the cohomology of $\mathcal{L}_\bullet^\lambda$ is $\mathcal{S}^{\lambda[n]}$ in degree zero. We may compute the character of $\mathcal{S}^{\lambda[n]}$ as a representation of $S_n$ by taking the trace of a permutation matrix on the Euler characteristic of $\mathcal{L}_\bullet^\lambda$.
\newline \newline \noindent
Recall that the trace of $g \in GL_n(k)$ acting on $\mathbb{S}^{\mu}(k^n)$ is equal to the Schur function $s_\mu$ evaluated at the eigenvalues of $g$ (viewed as an $n \times n$ matrix). Taking the Euler characteristic of $\mathcal{L}_\bullet^\lambda$, we obtain the symmetric function
\[
s_\lambda^\dagger = \sum_{\mu \leq |\lambda|} (-1)^{|\lambda| - |\mu|} \dim(M(\lambda, \mu)) s_\mu.
\]
This symmetric function satisfies the property that when it is evaluated at the eigenvalues of a permutation matrix, the value is the character of $\mathcal{S}^{\lambda[n]}$ evaluated at that permutation. This is the defining property of the irreducible character basis, $\tilde{s}_\lambda$, (of the ring of symmetric functions) of \cite{OZ2}, introduced independently in \cite{AssafSpeyer} under the name stable Specht polynomials. Thus the Littlewood complexes categorify the symmetric functions $s_\lambda^\dagger$.
\newline \newline \noindent
We may express $\dim(M(\lambda, \mu))$ in terms of symmetric functions. Let $\langle -,- \rangle$ be the usual inner product on symmetric functions. Also define the two symmetric functions
\[
H = \sum_{m \geq 0} h_m, \hspace{10mm}
L = \sum_{m \geq 1} \frac{1}{m} \sum_{d \mid m} \mu(d) p_d^{m/d}
\]
to be the sum of all complete symmetric functions, and Lyndon symmetric functions, respectively (here, $\mu$ is the usual M\"{o}bius function, and $p_d$ are the power-sum symmetric functions). Thus, $H$ is the character of $\Sym(V)$ as a representation of $GL(V)$, while $L$ is the character of $L(V)$ as a representation of $GL(V)$. Finally, let us write $s_{\mu^\prime}[L]$ to indicate the plethysm of $s_{\mu^\prime}$ with $L$. Then we have:
\[
\dim(M(\lambda, \mu)) = 
\langle s_{\lambda^\prime}, s_{\mu^\prime}[L] H \rangle.
\]
After applying the Pieri rule, this is precisely Theorem 2 of \cite{AssafSpeyer}.
\newline \newline \noindent
In both \cite{OZ2} and \cite{AssafSpeyer}, these symmetric functions were defined by considering representations of $S_n$ when $n$ is sufficiently large with respect to $\lambda$. Our description allows us to understand how they behave for all $n$.
\begin{theorem}
The value of $s_\lambda^\dagger$ evaluated on the eigenvalues of a permutation matrix of size $n$ is equal to $(-1)^{\delta_n(\lambda)}$ times the character of $\mathcal{S}^{\lambda[n]}$ if $\delta_n(\lambda)$ is finite, and zero otherwise.
\end{theorem}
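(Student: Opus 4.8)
The plan is to recognise $s_\lambda^\dagger$, evaluated at the eigenvalues of a permutation matrix, as the trace of that permutation on the Euler characteristic of $\mathcal{L}_\bullet^\lambda$, and then to apply the Euler--Poincar\'e (Hopf trace) principle. Recall that for $g \in GL_n(k)$ the trace of $g$ on $\mathbb{S}^{\mu}(k^n)$ is the Schur function $s_\mu$ evaluated at the eigenvalues of $g$; in particular, when $g$ is the permutation matrix attached to $\sigma \in S_n$, this number is exactly the value at $\sigma$ of the character of $\mathbb{S}^{\mu}(k^n)$ regarded as an $S_n$-representation by restriction along $S_n \hookrightarrow GL_n(k)$, which is how the terms of $\mathcal{L}_\bullet^\lambda$ are built. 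Since the term of $\mathcal{L}_\bullet^\lambda$ indexed by $\mu$ sits in homological degree $|\mu|$ and contributes with multiplicity $\dim M(\lambda, \mu)$, the formula $s_\lambda^\dagger = \sum_{\mu} (-1)^{|\lambda| - |\mu|} \dim(M(\lambda,\mu)) s_\mu$ evaluated at the eigenvalues of $g$ is precisely the alternating sum $\sum_i (-1)^i \operatorname{tr}\!\big(g \mid \mathcal{L}_i^\lambda\big)$ of traces of $g$ on the (co)chain groups of the complex.

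Next I would invoke the Euler--Poincar\'e principle: $\mathcal{L}_\bullet^\lambda$ is a bounded complex of finite-dimensional $S_n$-representations and $g$ acts on it by a chain automorphism (the $S_n$-action commutes with the differential, as the differential is $GL(V) \times S_n$-equivariant), so the alternating sum of traces of $g$ on the chain groups equals the alternating sum of traces of $g$ on the cohomology, namely $\sum_i (-1)^i \operatorname{tr}\!\big(g \mid H^i(\mathcal{L}_\bullet^\lambda)\big)$.

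Finally, I would substitute the cohomology computation already established: $H^\bullet(\mathcal{L}_\bullet^\lambda)$ is $\mathcal{S}^{\lambda[n]}$ concentrated in degree $\delta_n(\lambda)$ when $\delta_n(\lambda)$ is finite, and is identically zero when $\delta_n(\lambda) = \infty$. In the first case the alternating trace sum collapses to $(-1)^{\delta_n(\lambda)}$ times the value at $\sigma$ of the character of $\mathcal{S}^{\lambda[n]}$, and in the second case it is $0$; this is exactly the assertion of the theorem.

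There is no genuine obstacle here: the content of the statement is carried entirely by the earlier homology computation and by the Euler--Poincar\'e principle. The only points needing care are bookkeeping ones, namely checking that the degree and sign conventions of $\mathcal{L}_\bullet^\lambda$ reproduce the signs $(-1)^{|\lambda|-|\mu|}$ in the definition of $s_\lambda^\dagger$ (immediate, since the $\mu$-term lives in degree $|\mu|$), and that ``evaluating a symmetric function at the eigenvalues of a permutation matrix'' agrees with taking the corresponding trace on the restriction of the associated polynomial $GL_n(k)$-representation to $S_n$ (standard). I would keep the exposition brief and simply cite the Euler--Poincar\'e formula rather than reprove it.
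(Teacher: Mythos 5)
Your argument is correct and is essentially the paper's own proof, just spelled out: the paper simply notes that the result is immediate from the identification of $s_\lambda^\dagger$ with the Euler characteristic of $\mathcal{L}_\bullet^\lambda$ together with the cohomology computation, which is exactly the Euler--Poincar\'e argument you make explicit. One small bookkeeping remark: since $\mathcal{L}_\bullet^\lambda$ is defined as a \emph{cochain} complex, the term indexed by $\mu$ sits in cohomological degree $|\lambda| - |\mu|$ (not $|\mu|$ as you state), which is what directly matches the sign $(-1)^{|\lambda| - |\mu|}$ in the definition of $s_\lambda^\dagger$ and produces $(-1)^{\delta_n(\lambda)}$ rather than $(-1)^{|\lambda| - \delta_n(\lambda)}$ after collapsing to cohomology.
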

\begin{proof}
This is immediate from the equality of $s_\lambda^\dagger$ with the Euler characteristic of the Littlewood complex, together with the characterisation of the cohomology of the Littlewood complex.
\end{proof}


\bibliographystyle{alpha}
\bibliography{ref2.bib}

\end{document}